\documentclass[a4paper]{amsart}

\usepackage[english]{babel}
\usepackage{graphicx,subfigure}
\usepackage{amsmath, amsfonts, amsthm, upgreek,latexsym,amssymb}
\usepackage{enumerate}

\def\zent#1{{\bf Z}(#1)}
\def\zentf(#1){{\bf Z}(#1)}

\newenvironment{enumeratei}{\begin{enumerate}[\upshape (a)]}
    {\end{enumerate}}

\newtheorem{theorem}{Theorem}
\newtheorem{lemma}[theorem]{Lemma}

\newtheorem{defn}[theorem]{Definition}

\newtheorem*{thm}{Theorem}
\def\cent#1#2{{\bf C}_{#1}(#2)}

\newcommand{\gamg}{\Gamma(G)}

\begin{document}

\title[Conjugacy classes of finite groups and graph regularity]{Conjugacy classes of finite groups and\\ graph regularity}
\author[M. Bianchi]{Mariagrazia Bianchi}
\address{Mariagrazia Bianchi, Dipartimento di Matematica F. Enriques,
\newline Universit\`a degli Studi di Milano, via Saldini 50,
20133 Milano, Italy.} \email{mariagrazia.bianchi@unimi.it}

\author[R.D. Camina]{Rachel D. Camina}
\address{Rachel D. Camina, Fitzwilliam College, Cambridge, CB3 0DG,
UK.} \email{rdc26@dpmms.cam.ac.uk}

\author[M. Herzog]{Marcel Herzog}
\address{Marcel Herzog, Schoool of Mathematical Sciences, \newline Raymond and
Beverly Sackler Faculty of Exact Sciences,\newline Tel-Aviv
University, Tel-Aviv, 69978, Israel.} \email{herzogm@post.tau.ac.il}

\author[E. Pacifici]{Emanuele Pacifici}
\address{Emanuele Pacifici, Dipartimento di Matematica F. Enriques,
\newline Universit\`a degli Studi di Milano, via Saldini 50,
20133 Milano, Italy.} \email{emanuele.pacifici@unimi.it}

\thanks{The first and the fourth author are partially supported by the
MIUR project ``Teoria dei gruppi e applicazioni''. The second and the third author are grateful to the Department of Mathematics
of the University of Milano for its hospitality and support,
while this investigation was carried out.}
\subjclass[2000]{20E45}

\dedicatory{Dedicated to the memory of David Chillag}

%\date\today

\begin{abstract}
Given a finite group \(G\), denote by \(\gamg\) the simple
undirected graph whose vertices are the distinct sizes of noncentral
conjugacy classes of \(G\), and set two vertices of \(\gamg\) to be
adjacent if and only if they are not coprime numbers. In this note
we prove that, if \(\gamg\) is a \(k\)-regular graph with \(k\geq
1\), then \(\gamg\) is a complete graph with \(k+1\) vertices.

\smallskip\noindent\emph{Keywords: finite groups, conjugacy class sizes.}
\end{abstract}

\maketitle

\section{Introduction}
Given a finite group $G$, let $\Gamma(G)$ be the simple
undirected graph whose vertices are the distinct sizes of
\emph{noncentral} conjugacy classes of $G$, two of them being
adjacent if and only if they are not coprime numbers. The interplay
between certain properties of this graph and the group structure of
\(G\) has been widely studied in the past decades, and it is
nowadays a classical topic in finite group theory (see, for
instance, \cite{CCs}). The present note is a contribution in this
direction.

In \cite{BHPS} it is conjectured that, for every integer \(k\geq 1\),
the graph $\Gamma(G)$ is $k$-regular if and only if $\Gamma(G)$ is a complete
graph with \(k+1\) vertices. That paper settles the case \(k\leq
3\), whereas in this note, using a different approach, we provide an
affirmative answer to the conjecture in full generality.

\begin{thm}
Let \(G\) be a finite group, and assume that \(\Gamma(G)\) is a
\(k\)-regular graph with \(k\geq 1\). Then \(\Gamma(G)\) is a
complete graph with \(k+1\) vertices.
\end{thm}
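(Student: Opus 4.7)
\smallskip

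\noindent\emph{Proof plan.} For every prime $p$, let $V_p$ denote the set of vertices of $\Gamma(G)$ divisible by $p$. Because two vertices of $\Gamma(G)$ are adjacent if and only if they share a common prime factor, each $V_p$ induces a clique, and for every vertex $v$ we have
\[
N(v)\cup\{v\} \;=\; \bigcup_{p \mid v} V_p.
\]
Since $\Gamma(G)$ is $k$-regular, both sides have cardinality exactly $k+1$; in particular $|V_p|\le k+1$ for every prime $p$ dividing some noncentral class size. Proving the theorem amounts to showing that $N(v)\cup\{v\}$ coincides with the full vertex set of $\Gamma(G)$ for some (equivalently, for every) vertex $v$.

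The first step is to rule out that $\Gamma(G)$ is disconnected. By the classical theorem of Bertram, Herzog and Mann, $\Gamma(G)$ has at most two connected components; further structural results on groups with disconnected $\Gamma(G)$ (essentially that $G/Z(G)$ is a Frobenius group with abelian kernel) imply that in this case one of the two components consists of a single vertex, namely the common class size of the noncentral elements in the preimage of the Frobenius kernel. Such a vertex is isolated in $\Gamma(G)$, which is incompatible with $k$-regularity for $k\ge 1$. Hence $\Gamma(G)$ is connected.

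The second step, the heart of the argument, is to prove that a connected $k$-regular $\Gamma(G)$ is complete. I would argue by contradiction: suppose some vertex $w$ is at distance $d\ge 2$ from a fixed vertex $v$, and choose a shortest path $v=u_0,u_1,\dots,u_d=w$. The middle vertex $u_1$ must be divisible by a prime $p\in\pi(v)$ and by a prime $q\in\pi(u_2)$ with $p\ne q$ and $q\nmid v$, so the displayed identity applied to $u_1$ forces $V_p\cup V_q\subseteq N(u_1)\cup\{u_1\}$, a set of size $k+1$. Combining these constraints with the identities applied at $v$ and $u_2$, and with genuinely arithmetic facts about centralizers of commuting elements (of Ito type), one aims to derive a contradiction with $k$-regularity.

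The main obstacle is this last step. A purely graph-theoretic version of the claim is false: a $4$-cycle with vertex-labels $\{p_1,p_2\},\{p_2,p_3\},\{p_3,p_4\},\{p_4,p_1\}$ (four distinct primes) is $2$-regular, satisfies the neighborhood identity displayed above, yet is not complete. The proof must therefore invoke features of conjugacy class sizes that go beyond the prime-intersection structure they define, and this is what makes a short argument delicate.
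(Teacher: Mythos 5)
Your setup is sound and your self-diagnosis is accurate: the connectedness reduction works (indeed, by Bertram--Herzog--Mann and Kazarin a disconnected $\Gamma(G)$ consists of two isolated vertices, so $k$-regularity with $k\ge 1$ fails immediately), and your $4$-cycle example correctly shows that no purely combinatorial argument from the neighborhood identity can finish the job. But what you have is a frame around a missing picture: the entire core of the proof --- the group-theoretic input that breaks the $4$-cycle obstruction --- is absent, and your sketch of "combining these constraints with genuinely arithmetic facts of Ito type" does not identify which facts or how they are combined. As written, the argument does not close.

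The missing engine is the following consequence of regularity: if $x,y$ are noncentral with $\cent Gx\le\cent Gy$, then $|y^G|$ divides $|x^G|$, so the closed neighborhood of $|y^G|$ is contained in that of $|x^G|$; since both have size $k+1$, they are \emph{equal}. Call such vertices partners. This single observation, applied to powers of elements and to products of commuting elements of coprime order, is what propagates neighborhoods along the group structure rather than along the graph. The paper's proof then runs: (i) if any class size is a prime power the graph is complete (its $p$-divisible neighbors form a clique, and connectedness plus regularity forbids anything outside); (ii) the diameter-$3$ configurations classified by Kazarin are killed by the partner relation, so the diameter is at most $2$; (iii) one produces two vertices $|x^G|,|y^G|$ with coprime sizes whose centralizers are \emph{minimal} centralizers, and shows (using step (i)) that $x,y$ can be taken \emph{strongly noncentral}, i.e.\ with ${\textnormal{o}}(xZ)$ not a prime power; (iv) taking a common neighbor $|w^G|$, a prime $p$ dividing ${\textnormal{o}}(wZ)$, and conjugating so that $w_p\in\cent Gx$ (possible since $|x^G|,|y^G|$ coprime forces a Sylow $p$-subgroup into one of the two centralizers), the chain $|x^G|\sim|(x_q)^G|\sim|(w_p)^G|\sim|w^G|$ of partner relations (with $q\ne p$ dividing ${\textnormal{o}}(xZ)$, using coprimality of orders and commutation at each link) makes $|x^G|$ and $|w^G|$ partners, contradicting that $|y^G|$ is adjacent to $|w^G|$ but not to $|x^G|$. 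None of steps (i)--(iv) is recoverable from your neighborhood identity alone; you would need to discover the partner lemma and the minimal-centralizer/strongly-noncentral device to complete the proof.
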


Another graph related to the conjugacy classes of finite groups,
that has been extensively studied in the literature, is the 
\emph{prime graph} \(\Delta(G)\): in this case the vertices are the
primes dividing some class size of \(G\), and two distinct vertices
\(p\), \(q\) are adjacent if and only if there exists a class size
of \(G\) that is divisible by \(pq\). It is well known that the
graphs \(\Gamma(G)\) and \(\Delta(G)\) share some relevant
properties (for instance, they have the same number of connected
components, and the diameters of the two graphs differ by at most
\(1\)). Nevertheless we remark that, in contrast to the situation
described by the main result of this note, the class of finite
groups \(G\) such that \(\Delta(G)\) is a non-complete, connected
and regular graph is not empty. Such groups have been classified in
Therorem~D of \cite{CDPS}.

\section{The results}
Every group considered in the following discussion is tacitly
assumed to be a finite group. We start by introducing a definition.

\begin{defn}
\label{Partners} Let \(\Gamma\) be a graph and, for a given vertex
\(X\) of \(\Gamma\), denote by \(\nu(X)\) the set of neighbors of
\(X\) in \(\Gamma\) (i.e., the set of vertices of \(\Gamma\) that
are adjacent to \(X\)). Let \(A\), \(B\) be vertices of \(\Gamma\):
we say that \(A\) and \(B\) are \emph{partners} if
\(\nu(A)\cup\{A\}=\nu(B)\cup\{B\}\). This clearly defines an
equivalence relation on the vertex set of $\Gamma$.
\end{defn}

Next, given a group \(G\), we focus on the common divisor graph \(\Gamma(G)\) built on the set of conjugacy class sizes of \(G\), as defined in the Introduction. For \(g\in G\), the \(G\)-conjugacy class of \(g\) will be denoted by \(g^G\).

\begin{lemma}
Let \(G\) be a group, and assume that \(\Gamma(G)\) is a
\(k\)-regular graph with \(k\geq 1\). Then \(\Gamma(G)\) is a
connected graph. \label{Connected}
\end{lemma}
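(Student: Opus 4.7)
I would argue by contradiction, supposing $\gamg$ is disconnected and deducing that it must have no edges at all, contradicting $k\geq 1$. The starting point is the classical theorem of Bertram, Herzog, and Mann, which asserts that $\gamg$ has at most two connected components, and that whenever it has exactly two components, $G/\zg$ is a Frobenius group. Writing $\bar K$ for the Frobenius kernel and $\bar H$ for a complement, and $K, H$ for their preimages in $G$, the two components $A$ and $B$ of $\gamg$ are, after relabeling, exactly $\{|x^G| : x\in K\setminus\zg\}$ and $\{|y^G| : y\in H\setminus\zg\}$. The Frobenius structure also forces $\cgx\leq K$ for every $x\in K\setminus\zg$ (so $|\bar H|$ divides $|x^G|$), and symmetrically $\cgy\leq H$ for $y\in H\setminus\zg$ (so $|\bar K|$ divides $|y^G|$).

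Next I convert disconnectedness into a rigid group-theoretic condition. Since every element of $A$ is divisible by $|\bar H|$, every element of $B$ by $|\bar K|$, and $\pi(|\bar H|)\cap\pi(|\bar K|)=\emptyset$, the absence of edges between $A$ and $B$ is equivalent to requiring that no $|x^G|$ in $A$ is divisible by any prime in $\pi(|\bar K|)$, and symmetrically for $B$. Using that $K$ is nilpotent (since the Frobenius kernel $\bar K$ is nilpotent and $\zg\leq Z(K)$), we decompose $K$ as the direct product of its Sylow subgroups and analyze $\cgx$ one prime at a time. The prime-avoidance condition translates into $C_P(x_p)=P$, i.e.\ $x_p\in Z(P)$, for every Sylow $p$-subgroup $P$ of $K$ with $p\in\pi(|\bar K|)$ and every non-central $x\in K$; this forces such $P$ to be abelian. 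Since the remaining Sylow subgroups of $K$ are contained in $\zg$, hence abelian, $K$ itself is abelian, and the symmetric argument gives $H$ abelian.

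With $K$ and $H$ both abelian, $\cgx=K$ for every $x\in K\setminus\zg$ and $\cgy=H$ for every $y\in H\setminus\zg$, so the only non-central class sizes are $|G:K|=|\bar H|$ and $|G:H|=|\bar K|$. Thus $\gamg$ has just two vertices and no edges, i.e.\ it is $0$-regular, contradicting $k\geq 1$. The main obstacle is the Sylow-by-Sylow analysis in the second paragraph: turning the combinatorial condition ``$A$ and $B$ are disjoint in terms of primes'' into the rigid structural conclusion that $K$ and $H$ are abelian is where the Frobenius structure of $G/\zg$, the nilpotence of $K$, and the containment $\cgx\leq K$ all must conspire.
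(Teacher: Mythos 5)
Your argument reaches the right conclusion, but it takes a much longer road than the paper and contains one step that does not survive scrutiny as written. The paper's proof is a one-line citation: the Bertram--Herzog--Mann/Kazarin result is already available in the strong form that a disconnected \(\Gamma(G)\) consists of \emph{two isolated vertices}, so no Frobenius analysis is needed at all --- such a graph is \(0\)-regular, contradicting \(k\geq 1\). You instead invoke only the weaker half of that theorem (two components, \(G/\zent G\) Frobenius, components supported on the preimages \(K\) and \(H\) of the kernel and a complement) and re-derive the ``two isolated vertices'' conclusion yourself. That reconstruction is legitimate, but the phrase ``the symmetric argument gives \(H\) abelian'' conceals a genuine problem: your argument for \(K\) leans on the nilpotence of the Frobenius kernel in order to decompose \(K\) into its Sylow subgroups, and a Frobenius complement need not be nilpotent, so the literally symmetric argument is not available for \(H\).

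Fortunately the Sylow-by-Sylow analysis is unnecessary on both sides. For \(x\in K\setminus \zent G\) one has \(\cent G x\leq K\) and \(\zent G\leq \cent K x\), so \(|K:\cent K x|\) divides \(|K/\zent G|=|\bar K|\) and \(|x^G|=|\bar H|\cdot|K:\cent K x|\). Since \(|x^G|\) must be coprime to some \(|y^G|\) with \(y\in H\setminus\zent G\), and every such \(|y^G|\) is divisible by \(|\bar K|\), the factor \(|K:\cent K x|\) is a divisor of \(|\bar K|\) coprime to \(|\bar K|\), hence equals \(1\); thus \(K\) is abelian, and the same two lines with \(K\) and \(H\) exchanged give \(H\) abelian with no nilpotency required. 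With that repair your proof is correct; it is essentially a self-contained reproof of the cited classification rather than an application of it.
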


\begin{proof} If \(\Gamma(G)\) is not connected, then by \cite{BHM} (or \cite{K}) it consists
of two isolated vertices, and hence it is not \(k\)-regular for any $k\ge 1$.
\end{proof}

\begin{lemma}
\label{pPower} Let \(G\) be a group, and assume that \(\Gamma(G)\)
is a \(k\)-regular graph with \(k\geq 1\). If there exists \(x\in
G\setminus\zent G\) such that \(|x^G|\) is a prime power, then
\(\Gamma(G)\) is a complete graph.
\end{lemma}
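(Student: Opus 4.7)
The plan is to use the prime-power class size $|x^G|=p^a$ to force every vertex of $\Gamma(G)$ to be divisible by the single prime $p$. My first observation would be that, by the definition of adjacency in $\Gamma(G)$, a vertex $m$ of $\Gamma(G)$ is adjacent to $p^a$ precisely when $p$ divides $m$; combined with $k$-regularity this pins down the set $V_p$ of vertices of $\Gamma(G)$ divisible by $p$, since it consists of $p^a$ together with its $k$ neighbors, and hence $|V_p|=k+1$.

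The key step is then to notice that $V_p$ induces a clique in $\Gamma(G)$, since any two of its vertices share the prime $p$. This means that every element of $V_p$ already has $k$ neighbors inside $V_p$, and by $k$-regularity cannot have any further neighbor. Consequently there are no edges between $V_p$ and the remaining vertices of $\Gamma(G)$, so $V_p$ is a union of connected components. Invoking Lemma~\ref{Connected}, which guarantees that $\Gamma(G)$ is connected, then forces the entire vertex set of $\Gamma(G)$ to coincide with $V_p$, and we conclude that $\Gamma(G)$ is a complete graph on $k+1$ vertices.

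I do not anticipate any serious obstacle: the whole argument rests on the trivial clique-structure of $V_p$ and a simple degree count, with no further group-theoretic input. The genuine role played by the prime-power hypothesis is to let the single vertex $p^a$ simultaneously identify the set of $p$-divisible class sizes as its closed neighborhood and fix the cardinality of that set at exactly $k+1$, after which the regularity condition mechanically propagates the clique structure to the whole graph.
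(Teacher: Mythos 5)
Your argument is correct and is essentially the paper's own proof: both identify the closed neighborhood of the prime-power vertex as the set of $p$-divisible class sizes, observe it is a clique of size $k+1$ saturating the degree of each of its vertices, and then invoke Lemma~\ref{Connected} to conclude (the paper phrases the last step as a vertex of valency at least $k+1$, you as the clique being a union of components, which is the same degree count).
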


\begin{proof}
Assume that \(|x^G|\) is a power of the prime \(p\). Then the \(k\)
neighbors of \(|x^G|\) are all divisible by \(p\), and the subgraph
of \(\Gamma(G)\) induced by them is complete. If the whole
\(\Gamma(G)\) is not complete, the connectedness of \(\Gamma(G)\) (ensured by Lemma~\ref{Connected}) 
implies the existence of \(y\in G\setminus \zent G\) such that
\(|y^G|\) is not divisible by \(p\) but is adjacent to a neighbor of
\(|x^G|\); this neighbor will have now valency at least \(k+1\),
thus violating the regularity of \(\Gamma(G)\).
\end{proof}

\begin{lemma}
\label{CoprimeOrdersCommuting} Let \(G\) be a group, and assume that
\(\Gamma(G)\) is a regular graph. If \(x\) and \(y\) are noncentral
elements of \(G\) such that \(\cent G x\leq\cent G y\), then
\(|x^G|\) and \(|y^G|\) are partners in \(\Gamma(G)\). In
particular, the following conclusions hold.
\begin{enumeratei}
\item If \(x\) and \(y\) are noncentral elements of \(G\) having coprime orders, and such that \(xy=yx\),
then \(|x^G|\) and \(|y^G|\) are partners in \(\Gamma(G)\).
\item If \(x\) is an element of \(G\) and \(k\) is an integer such that \(x^k\) is noncentral,
then \(|x^G|\) and \(|(x^k)^G|\) are partners in \(\Gamma(G)\).
\end{enumeratei}
\end{lemma}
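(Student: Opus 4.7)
The plan is to show that the closed neighborhoods of $|x^G|$ and $|y^G|$ in $\gamg$ coincide, exploiting the divisibility relation between these two class sizes together with the $k$-regularity of $\gamg$. Set $a=|x^G|$ and $b=|y^G|$. From $\cg x\leq \cg y$ we get $[G:\cg y]\mid [G:\cg x]$, i.e., $b\mid a$. If $a=b$, they represent the same vertex and there is nothing to prove, so assume $a\neq b$. Since $y$ is noncentral, $b>1$; as $b\mid a$, every prime divisor of $b$ divides $a$, so $a$ and $b$ are adjacent.

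For the main inclusion, take any vertex $c\in\nu(b)\setminus\{a\}$. Then $\gcd(c,b)>1$, and since $b\mid a$ we also have $\gcd(c,a)>1$; as $c\neq a$, this gives $c\in\nu(a)\setminus\{b\}$. Hence $\nu(b)\setminus\{a\}\subseteq\nu(a)\setminus\{b\}$. The key step is the reverse inclusion: by $k$-regularity, both sets have cardinality $k-1$ (each of $\nu(a),\nu(b)$ has size $k$, and $b\in\nu(a)$, $a\in\nu(b)$ because $a,b$ are adjacent), so the inclusion is actually an equality. This yields $\nu(a)\cup\{a\}=\nu(b)\cup\{b\}$, meaning $a$ and $b$ are partners.

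For the consequences, I would reduce both to the main statement. In (b), since $x^k\in\langle x\rangle$, clearly $\cg x\leq \cg{x^k}$, and the claim follows immediately. For (a), note that when $x$ and $y$ are commuting elements of coprime orders, there exist integers $m,n$ with $x=(xy)^m$ and $y=(xy)^n$ (by the Chinese Remainder Theorem applied to the cyclic group $\langle x,y\rangle=\langle xy\rangle$); in particular $xy$ cannot be central, for otherwise $x$ and $y$ would be central as well. Therefore $xy$ is a noncentral element satisfying $\cg{xy}\leq\cg x$ and $\cg{xy}\leq\cg y$, so by the main claim both $|x^G|$ and $|y^G|$ are partners with $|(xy)^G|$. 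The conclusion follows from the fact, recorded in Definition~\ref{Partners}, that being partners is an equivalence relation and hence transitive.

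The only slightly delicate point is the counting step in the main argument, which crucially uses that the graph is regular (not merely that $\gamg$ has some structural property). Without regularity one only obtains one inclusion of neighborhoods. Everything else is essentially formal once the divisibility $b\mid a$ is noticed.
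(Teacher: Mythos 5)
Your proof is correct and follows essentially the same route as the paper: the divisibility $|y^G|\mid |x^G|$ gives one inclusion of (closed) neighborhoods, regularity upgrades it to an equality, and parts (a) and (b) reduce to the main claim via $\cent G{xy}=\cent G x\cap\cent G y$ and $\cent G x\leq\cent G{x^k}$ together with transitivity of the partner relation. Your explicit check that $xy$ is noncentral in (a) is a detail the paper leaves implicit, but the argument is the same.
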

\begin{proof}
As \(\cent G x\leq \cent G y\), we have that \(|y^G|\) is a divisor
(different from \(1\)) of \(|x^G|\). Clearly we have
\[\nu(|y^G|)\cup\{|y^G|\}\subseteq\nu(|x^G|)\cup\{|x^G|\};\] but the
regularity of \(\Gamma(G)\) forces those two sets to have the same
cardinality. Hence equality holds, and \(|x^G|\) and \(|y^G|\) are
partners. Now, since \(\cent G x\leq \cent G {x^k}\), Conclusion (b)
follows. Moreover, in the setting of Conclusion (a), we have \(\cent
G {xy}=\cent G x\cap\cent G y\); thus, taking into account the
transitivity of being partners, Conclusion (a) follows as well.
\end{proof}

Recall that, for every group \(G\), the diameter of the graph
\(\Gamma(G)\) is at most~\(3\) (see~\cite{CHM} or \cite{K}). The
following proposition shows that no graph of this kind can be
regular of diameter \(3\).

\begin{lemma}
Let \(G\) be a group, and assume that \(\Gamma(G)\) is a
\(k\)-regular graph with \(k\geq 1\). Then the diameter of
\(\Gamma(G)\) is at most \(2\). \label{Diameter2}
\end{lemma}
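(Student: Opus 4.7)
The plan is to argue by contradiction, assuming $\diam(\Gamma(G))=3$. By Lemma~\ref{pPower} no noncentral class size can be a prime power, so every noncentral class size has at least two distinct prime divisors. Pick a geodesic $A-B-C-D$ realizing the diameter, so in particular $\gcd(A,D)=1$, and fix noncentral $a,d\in G$ with $|a^G|=A$ and $|d^G|=D$. Coprimality of the class sizes gives $C_G(a)C_G(d)=G$, and a standard consequence is that some conjugate of $a$ centralizes $d$; after replacing $a$ by this conjugate, we may assume $[a,d]=1$.

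Next, since $a$ and $d$ are noncentral, we can pick primes $p$ and $q$ such that the $p$-part $a_p$ of $a$ and the $q$-part $d_q$ of $d$ are both noncentral. The key step is to take $p\neq q$: then $a_p$ and $d_q$ commute (as powers of the commuting elements $a,d$) and have coprime orders, whence $C_G(a_p d_q)=C_G(a_p)\cap C_G(d_q)$. Lemma~\ref{CoprimeOrdersCommuting}(b) ensures $|a_p^G|$ divides $A$ and $|d_q^G|$ divides $D$, so these class sizes are coprime, and the centralizer identity $C_G(a_p)C_G(d_q)=G$ then yields
\[|(a_p d_q)^G|=|a_p^G|\cdot|d_q^G|.\]
Since $a_p$ and $d_q$ are noncentral, this product is divisible by at least one prime of $\pi(A)$ and at least one of $\pi(D)$, so the vertex $|(a_p d_q)^G|$ is adjacent to both $A$ and $D$ in $\Gamma(G)$, contradicting $d(A,D)=3$.

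The obstruction is the case in which the choice $p\neq q$ fails: there is a unique prime $p$ such that $a_p$ and $d_p$ are the only noncentral prime-power components of $a$ and $d$, so that after replacement $a,d$ become noncentral commuting $p$-elements with class sizes $A$ and $D$. In this situation one studies $ad$: it is noncentral (otherwise $C_G(a)=C_G(d)$ forces $A=D$), and the identity $C_G(ad)\cap C_G(a)=C_G(a)\cap C_G(d)$ together with $|C_G(ad)\cdot C_G(a)|\leq|G|$ gives $|(ad)^G|\geq\max(A,D)$; combined with $|(ad)^G|$ dividing $AD$, either $|(ad)^G|$ has prime divisors in both $\pi(A)$ and $\pi(D)$ and we are done, or $|(ad)^G|\in\{A,D\}$. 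In the latter case one examines the power $(ad)^{|a|}=d^{|a|}$: if noncentral, Lemma~\ref{CoprimeOrdersCommuting}(b) applied to $(ad)$ and to $d$ makes $|d^{|a|}|^G$ a partner of both $A$ and $D$, forcing $A$ and $D$ to share the same closed neighborhood, which is absurd under $d(A,D)=3$. The main obstacle I anticipate is the boundary situation $|a|=|d|$ where these terminal powers become trivial, requiring a finer descent through the power structure of $\langle a,d\rangle$ to produce a noncentral power on which the partner argument can be run.
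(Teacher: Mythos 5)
Your approach is genuinely different from the paper's, which simply invokes Kazarin's classification of groups with $\diam(\Gamma(G))=3$ (direct products $F\times H$ with $(|F|,|H|)=1$, $\Gamma(F)$ two isolated vertices, $\Gamma(H)$ nonempty) and then applies Lemma~\ref{CoprimeOrdersCommuting} to $y$ and $xy$ for an immediate contradiction. Unfortunately your direct argument fails at its first substantive step. Coprimality of $|a^G|$ and $|d^G|$ does give $G=C_G(a)C_G(d)$, but it is \emph{not} a consequence that some conjugate of $a$ centralizes $d$: in $S_3$, take $a$ a transposition ($|a^G|=3$) and $d$ a $3$-cycle ($|d^G|=2$); then $C_G(a)C_G(d)=G$, yet no transposition lies in $C_G(d)=A_3$. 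The factorization only shows that $C_G(d)$ acts transitively on $a^G$, not that it meets $a^G$. The correct tool (the one the paper uses in the endgame of its main theorem) is the Sylow argument: if $u$ is a $p$-element and $p\nmid|y^G|$, then $C_G(y)$ contains a Sylow $p$-subgroup and hence a conjugate of $u$. But that applies only to prime-power elements, and even for the parts $a_p$, $d_q$ it is not automatic that $p\nmid|(d_q)^G|$ or $q\nmid|(a_p)^G|$ holds, so the commuting configuration you need is not secured. Everything downstream of ``we may assume $[a,d]=1$'' therefore rests on an unproved hypothesis.

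Two further points: you yourself flag that the terminal case (where the powers $(ad)^{|a|}$ and $(ad)^{|d|}$ may be central) is unresolved, so even granting the commuting assumption the proof is incomplete; and after replacing $a$, $d$ by their $p$-parts the class sizes need not remain $A$ and $D$ --- by Lemma~\ref{CoprimeOrdersCommuting} they are only \emph{partners} of $A$ and $D$ --- so the bookkeeping there also needs repair (though partnership does preserve the distance-$3$ contradiction). As written, the proposal is not a proof; if you wish to avoid citing Kazarin's classification, the workable route is to build the commuting noncentral prime-power elements via the Sylow containment coming from coprimality, as the paper does in the proof of its main theorem.
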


\begin{proof} The groups \(G\) such that the diameter of \(\Gamma(G)\) is \(3\) are classified in \cite{K}:
they are direct products \(F\times H\) where \((|F|,|H|)=1\), the
graph \(\Gamma(F)\) consists of two isolated vertices, and
\(\Gamma(H)\) is not the empty graph. Now, let \(|x^F|\) be a vertex
of \(\Gamma(F)\), \(|y^H|\) a vertex of \(\Gamma(H)\), and consider
the vertices \(|y^G|\), \(|(xy)^G|\) of \(\Gamma(G)\). Since \(\cent
G{xy}\leq\cent G y\) and \(\Gamma(G)\) is regular, it follows by
Lemma~\ref{CoprimeOrdersCommuting} that \(|y^G|\) and \(|(xy)^G|\)
are partners. This is a contradiction, because \(|x^G|\) is adjacent
to \(|(xy)^G|\) but not to \(|y^G|\).
\end{proof}

Before proving the main result, it will be convenient to introduce some more terminology. If \(g\in G\) is such that, for
every \(h\in G\), the condition \(\cent G h\leq\cent G g\) implies
\(\cent G h=\cent G g\), then we say that \(\cent G g\) is a
\emph{minimal centralizer} in \(G\). Also, we say that \(g\in G\) is
\emph{strongly noncentral} if the order of \(g\zent G\) (as an
element of \(G/\zent G\)) is not a prime power. Finally, given a prime \(p\), we denote by \(g_p\) and \(g_{p'}\)
respectively the \emph{\(p\)-part} and the \emph{\(p'\)-part} of
\(g\), i.e., \(g_p\) is a \(p\)-element, \(g_{p'}\) a
\(p'\)-element, and \(g_pg_{p'}=g=g_{p'}g_p\). Recall
that both \(g_p\) and \(g_{p'}\) are powers of \(g\). Note also that a prime \(p\) divides \({\textnormal{o}}(g\zent G)\) if and only if \(g_p\not\in\zent G\). 

We are now in a position to prove the main theorem of this note, that was stated in the Introduction.

\begin{proof}[Proof of the Theorem]
Let \(G\) be a group with centre $Z$ and, aiming at a contradiction, assume
that \(\Gamma(G)\) is a non-complete \(k\)-regular graph with
\(k\geq 1\). We start by proving two claims.

\medskip
\noindent{\bf{Claim 1.}} \emph{For every minimal centralizer \(\cent G g\) of
\(G\), there exists a strongly noncentral element \(x\in G\) such that \(\cent
G x=\cent G g\).}

\smallskip
Denoting by \(p\) a prime divisor of the order of
\(gZ\in G/Z\), observe first that \(\cent G g/Z\) cannot be a
\(p\)-group. In fact, if  \(|\cent G g/Z|=p^{n}\), then \(|g^G|\) is
a multiple of all the primes dividing some conjugacy class size of
\(G\), except perhaps \(p\). As \(\Gamma(G)\) is non-complete and regular, there
exists \(y\in G\setminus Z\) such that \(|y^G|\) is coprime to
\(|g^G|\); in other words, \(|y^G|\) is a \(p\)-power, and
Lemma~\ref{pPower} yields a contradiction.

Now, let \(q\neq p\) be a prime divisor of \(|\cent G g/Z|\), and
let \(yZ\in \cent G g/Z\) be an element of order \(q\). We can
clearly assume \(g_{p'}\in Z\), thus we have \(\cent G{g_p}=\cent
G{g}\). Moreover, \(g_p\) and \(y_q\) commute, whence, setting
\(x=g_py_q\), we get \[\cent G x=\cent G{g_py_q}=\cent
G{g_p}\cap\cent G{y_q}=\cent G g\cap\cent G{y_q}.\] The minimality
of \(\cent G g\) forces \(\cent G{x}=\cent G g\), and \(x\) is strongly noncentral in $G$, as desired.

\medskip
\noindent{\bf{Claim 2.}} \emph{There exist two elements \(x\), \(y\) such that \(|x^G|\)
and \(|y^G|\) are coprime, and both \(\cent G x\), \(\cent G y\) are
minimal centralizers.}

\smallskip
Choose \(x\in G\) so that \(\cent G x\) is minimal. As \(\Gamma(G)\)
is non-complete and regular, we can find \(w \in G\setminus Z\) such that
\(|x^G|\) and \(|w^G|\) are coprime. Now, let \(y\in G\) be such
that \(\cent G y\) is minimal and contained in \(\cent G w\).
Lemma~\ref{CoprimeOrdersCommuting} yields that \(|y^G|\) and
\(|w^G|\) are partners and, since \(|x^G|\) and \(|w^G|\) are
coprime, it follows that also \(|x^G|\) and \(|y^G|\) are coprime,
as required.

\medskip
In view of the previous claims, we can now conclude the proof. Let us choose two elements \(x\) and \(y\) as in Claim~2, i.e., \(|x^G|\) and \(|y^G|\) are coprime,
and both \(\cent G x\), \(\cent G y\) are minimal centralizers. In
particular, by Claim~1, we can assume
that both \(x\) and \(y\) are strongly noncentral in $G$.

Since, by Lemma~\ref{Diameter2}, the diameter of \(\Gamma(G)\)
is \(2\), we can find \(w\in G\) such that \(|w^G|\) is a common
neighbor for \(|x^G|\) and \(|y^G|\). Also,
let \(p\) be a prime divisor of \({\textnormal{o}}(wZ)\in G/Z\). As
\(|x^G|\) and \(|y^G|\) are coprime, we may assume that \(\cent G
x\) contains a Sylow \(p\)-subgroup of \(G\); thus, up to replacing
\(x\) by a conjugate of it, we have \(w_p\in\cent G x\). Now, since
\(x\) is strongly noncentral, there exists a prime \(q\neq p\) which
divides the order of \(xZ\); Lemma~\ref{CoprimeOrdersCommuting}
implies that \(|x^G|\) and \(|(x_q)^G|\) are partners, but
\(|(x_q)^G|\) is also a partner of \(|(w_p)^G|\), and the latter is
a partner of \(|w^G|\). As a consequence, \(|x^G|\) and \(|w^G|\)
are partners: this is the final contradiction, as \(|y^G|\) is
adjacent to \(|w^G|\) but not to \(|x^G|\).
\end{proof}

\pagebreak

\end{document}